\newtheorem{theorem}{Theorem}[section]
\newtheorem{corollary}[theorem]{Corollary}
\newtheorem{proposition}[theorem]{Proposition}
\newtheorem{definition}[theorem]{Definition}
\begin{document}
\title{\bf On $I_{\alpha^{m}}(A)$, $C_{\alpha^{m}}(A)$ and $\alpha^{m}$-open and closed Functions}
\date{}

\author{R. Parimelazhagan}

\address{Department of Mathematics, RVS Technical Campus, Coimbatore - 641 402, Tamilnadu, India. E-mail: pari$_{_{-}}$kce@yahoo.com.}

\author{Milby Mathew}

\address{Research Scholar, Department of Mathematics, Karpagam University, Coimbatore - 32, Tamilnadu, India. E-mail: milbymanish@yahoo.com}

\maketitle
\begin{abstract}
In this paper, we derive more on $\alpha^{m}$-continuous functions and $\alpha^{m}$-irresolute functions with $\alpha^{m}$-open maps and $\alpha^{m}$-closed maps in topological spaces also we introduce $I_{\alpha^{m}}(A)$ and $C_{\alpha^{m}}(A)$ by using the $\alpha^{m}$-open sets and $\alpha^{m}$-closed sets and studied some of their properties. 
\end{abstract}

{\noindent \bf Keywords :}  $I_{\alpha^{m}}(A)$ and $C_{\alpha^{m}}(A)$.\\

{\noindent \bf 2000 Mathematics Subject Classification : 57N40.}\\
\section{Introduction}

Malghan \cite{ma1982} introduced the concept of generalized closed maps in topological spaces. Devi \cite{de1994} introduced and studied $sg$-closed maps and $gs$-closed maps. $wg$-closed maps and $rwg$-closed maps were introduced and studied by Nagavani \cite{na1999}. Regular closed maps, $gpr$-closed maps and $rg$-closed maps have been introduced and studied by Long \cite{lo1978}, Gnanambal \cite{gn1997} and Arockiarani \cite{aroc1997} respectively. \\

 In 2014, \cite{milb2014} we introduced the concepts of $\alpha^{m}$-closed sets and $\alpha^{m}$-open set in topological spaces. Also we have introduced the concepts of $\alpha^{m}$-continuous functiopns and $\alpha^{m}$-irresolute functions. In this paper, by deriving the properties of $\alpha^{m}$-continuous functions and $\alpha^{m}$-irresolute functions with $\alpha^{m}$-open maps and $\alpha^{m}$-closed maps in topological spaces. Further we introduce the concept of $I_{\alpha^{m}}(A)$ and $C_{\alpha^{m}}(A)$ by using the $\alpha^{m}$-open sets and $\alpha^{m}$-closed sets respectively. Further various characterisation are studied. 
are introduced
\section{Preliminaries}
Throughout this paper $(X, \tau)$ and $(Y, \sigma)$ represent topological spaces on which no separation axioms are assumed unless otherwise mentioned. For a subset $A$ of a space $(X, \tau)$, $cl(A)$, $int(A)$ and $A^{c}$ denote the closure of $A$, the interior of $A$ and the complement of $A$ in $X$, respectively. \\

\begin{definition}
A subset $A$ of a topological space $(X, \tau)$ is called \\ \\
$(a)$ a preopen set \cite{ma1982} if $A\subseteq int(cl(A))$ and pre-closed set if $cl(int(A))\subseteq A$. \\ \\
$(b)$ a semiopen set \cite{le1963} if $A \subseteq cl(int(A))$ and semi closed set if $int(cl(A))\subseteq A$. \\ \\
$(c)$ an $\alpha$-open set \cite{nj1965} if $A \subseteq int(cl(int(A)))$ and an $\alpha$-closed set if $cl(int(cl(A)))\subseteq A$. \\ \\
$(d)$ a semi-preopen set \cite{an1986} ($\beta$-open set) if $A \subseteq cl(int(cl(A)))$ and semi-preclosed set if $int(cl(int(A))) \subseteq A$. \\ \\
$(e)$ an $\alpha^{m}$-closed set \cite{milb2014} if $int(cl(A))\subseteq U$, whenever $A\subseteq U$ and $U$ is $\alpha$-open. The complement of $\alpha^{m}$-closed set is called an $\alpha^{m}$-open set. \\
\end{definition}

\begin{definition}
A space $(X,\ \tau_{X})$ is called a $T_{\frac{1}{2}}$-space \cite{le1970} if every $g$-closed set is closed. \\
\end{definition}

\begin{definition} 
A function $f:(X,\ \tau)\longrightarrow(Y,\ \sigma)$ is called \\ \\
$(a)$ an $\alpha^{m}$-continuous \cite{milb} if $f^{-1}(V)$ is $\alpha^{m}$-closed in $(X,\ \tau)$ for every closed set $V$ of $(Y,\ \sigma)$. \\ \\
$(b)$ an $\alpha^{m}$-irresolute \cite{milb} if $f^{-1}(V)$ is $\alpha^{m}$-closed in $(X,\ \tau)$ for every $\alpha^{m}$-closed set $V$ of $(Y,\ \sigma)$. \\
$(b)$ an $\alpha$-irresolute \cite{nj1965} if $f^{-1}(V)$ is $\alpha$-closed in $(X,\ \tau)$ for every $\alpha$-closed set $V$ of $(Y,\ \sigma)$. \\
\end{definition}

\section{More on $\alpha^{m}$-open sets}
\begin{theorem}
A map $f:(X, \tau)\longrightarrow(Y, \sigma)$ is $\alpha^{m}$-closed if and only if for each subset $S$ of $(Y, \sigma)$ and each open set $U$ containing
$f^{-1}(S)$ there is an $\alpha^{m}$-open set $V$ of $(Y, \sigma)$ such that $S\subseteq V$ and $f^{-1}(V)\subseteq U$. \\
\end{theorem}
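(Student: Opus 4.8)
The plan is to prove the two implications of the biconditional separately, in each case passing between the pairs (closed, open) and ($\alpha^{m}$-closed, $\alpha^{m}$-open) by complementation and relying only on the elementary image--preimage inclusions $F\subseteq f^{-1}(f(F))$ and $f(f^{-1}(S))\subseteq S$. Recall that $f$ being $\alpha^{m}$-closed means that $f(F)$ is $\alpha^{m}$-closed in $(Y,\sigma)$ for every closed set $F$ of $(X,\tau)$, and that a set is $\alpha^{m}$-open exactly when its complement is $\alpha^{m}$-closed.

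For necessity, I would assume $f$ is $\alpha^{m}$-closed and fix a subset $S$ of $Y$ together with an open set $U$ with $f^{-1}(S)\subseteq U$. The natural candidate is $V=Y\setminus f(X\setminus U)$: since $X\setminus U$ is closed, $f(X\setminus U)$ is $\alpha^{m}$-closed by hypothesis, so $V$ is $\alpha^{m}$-open. I would then check the two required inclusions. For $S\subseteq V$, I would argue by contradiction: if some $y\in S$ also lay in $f(X\setminus U)$, then $y=f(x)$ for some $x\in X\setminus U$, whence $x\in f^{-1}(S)\subseteq U$, contradicting $x\notin U$. For $f^{-1}(V)\subseteq U$, I would compute $f^{-1}(V)=X\setminus f^{-1}(f(X\setminus U))\subseteq X\setminus(X\setminus U)=U$, using $X\setminus U\subseteq f^{-1}(f(X\setminus U))$.

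For sufficiency, I would assume the stated condition and take an arbitrary closed set $F$ of $X$, aiming to show $f(F)$ is $\alpha^{m}$-closed, equivalently that $S:=Y\setminus f(F)$ is $\alpha^{m}$-open. Here $U:=X\setminus F$ is open and $f^{-1}(S)=X\setminus f^{-1}(f(F))\subseteq X\setminus F=U$, so the hypothesis yields an $\alpha^{m}$-open set $V$ with $S\subseteq V$ and $f^{-1}(V)\subseteq U$. The key step is to promote $S\subseteq V$ to the equality $S=V$: from $f^{-1}(V)\subseteq X\setminus F$ I would deduce $F\cap f^{-1}(V)=\emptyset$, hence $f(F)\cap V=\emptyset$, giving $V\subseteq Y\setminus f(F)=S$. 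Thus $S=V$ is $\alpha^{m}$-open and $f(F)$ is $\alpha^{m}$-closed.

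The argument is conceptually light; no property of $\alpha^{m}$-open sets beyond ``complement of an $\alpha^{m}$-closed set'' is invoked, so it mirrors the classical characterization of closed maps almost verbatim. Accordingly, the only real point requiring attention---the part I would treat as the main obstacle---is the bookkeeping with direct images and preimages under complementation, and in particular the observation in the converse that the set $V$ supplied by the hypothesis can be pinned down to equal $S$ rather than merely to contain it; without that equality one does not recover the $\alpha^{m}$-openness of $S$.
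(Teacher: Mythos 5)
Your proof is correct and takes essentially the same route as the paper's: the same witness $V=(f(U^{c}))^{c}=Y\setminus f(X\setminus U)$ in the forward direction, and in the converse the same complementation argument identifying $f(F)$ with $V^{c}$ (your equality $S=V$ is exactly the paper's chain $V^{c}\subseteq f(F)\subseteq f((f^{-1}(V))^{c})\subseteq V^{c}$). If anything your write-up is tidier, since you verify the two inclusions for $V$ that the paper merely asserts, and you avoid the paper's typo $f^{-1}(f(F^{c}))\subseteq F^{c}$, where $(f(F))^{c}$ is clearly intended.
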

\begin{proof}
Suppose $f$ is $\alpha^{m}$-closed. Let $S\subseteq Y$ and $U$ be an open set of $(X, \tau)$ such that $f^{-1}(S)\subseteq U$. Then $V=(f(U^{c}))^{c}$ is an $\alpha^{m}$-open set containing $S$ such that $f^{-1}(V)\subseteq U$. \\ \\

For the converse, let $F$ be a closed set of $(X, \tau)$. Then $f^{-1}(f(F^{c}))\subseteq F^{c}$ and $F^{c}$ is open. By assumption, there exists an $\alpha^{m}$-open set $V$ in $(Y, \sigma)$ such that $(f(F^{c}))\subseteq V$ and $f^{-1}(V)\subseteq F^{c}$ and so $F\subseteq (f^{-1}(V))^{c}$. Hence $V^{c}\subseteq f(F)\subseteq f((f^{-1}(V))^{c})\subseteq V^{c}$ which implies $f(F) = V^{c}$. Since $V^{c}$ is $\alpha^{m}$-closed, $f(F)$ is $\alpha^{m}$-closed and therefore $f$ is $\alpha^{m}$-closed. \\
\end{proof}

\begin{proposition}
If $f:(X, \tau)\longrightarrow(Y, \sigma)$ is $\alpha$-irresolute $\alpha^{m}$-closed and $A$ is an $\alpha^{m}$-closed subset of $(X, \tau)$, then $f(A)$ is $\alpha^{m}$-closed in $(Y, \sigma)$. \\
\end{proposition}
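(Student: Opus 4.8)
The plan is to verify the defining condition for $f(A)$ to be $\alpha^{m}$-closed directly: I would show that whenever $V$ is an $\alpha$-open set of $(Y,\sigma)$ with $f(A)\subseteq V$, one has $int(cl(f(A)))\subseteq V$, which is exactly what the definition in part $(e)$ demands for $f(A)$.

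First I would pull the inclusion back through $f$. From $f(A)\subseteq V$ we get $A\subseteq f^{-1}(f(A))\subseteq f^{-1}(V)$. Because $f$ is $\alpha$-irresolute and $V$ is $\alpha$-open, its preimage $f^{-1}(V)$ is $\alpha$-open in $(X,\tau)$ (the $\alpha$-irresolute condition, read in complemented form, sends $\alpha$-open sets to $\alpha$-open sets). Thus $A$ is an $\alpha^{m}$-closed set sitting inside the $\alpha$-open set $f^{-1}(V)$, and the definition of $\alpha^{m}$-closed yields $int(cl(A))\subseteq f^{-1}(V)$, hence $f(int(cl(A)))\subseteq V$.

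Next I would bring in the $\alpha^{m}$-closed hypothesis on $f$. Since $cl(A)$ is closed in $(X,\tau)$ and $f$ is an $\alpha^{m}$-closed map, $f(cl(A))$ is an $\alpha^{m}$-closed set of $(Y,\sigma)$ containing $f(A)$; this is the target-side set I want to play against $f(A)$ and $V$. The aim is to transfer the established inclusion $int(cl(A))\subseteq f^{-1}(V)$ across $f$ so as to conclude $int(cl(f(A)))\subseteq V$.

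The hard part will be exactly this last transfer, because image and preimage do not commute with $int$ and $cl$, so a priori $int(cl(f(A)))$ is not controlled by $f(int(cl(A)))$. Here the two hypotheses play complementary roles: $\alpha$-irresoluteness is what keeps $f^{-1}(V)$ $\alpha$-open so that the $\alpha^{m}$-closedness of $A$ can even be invoked, while the $\alpha^{m}$-closed hypothesis is what forces the relevant closed image $f(cl(A))$ into the $\alpha^{m}$-closed class. Marrying these so that the given $\alpha$-open neighborhood $V$ actually captures $int(cl(f(A)))$ is the crux, and I would concentrate the effort there, appealing if necessary to the neighborhood-style characterization of Theorem 3.1 to obtain a more flexible $\alpha^{m}$-open set that closes the remaining gap.
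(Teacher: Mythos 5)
Your proposal sets up exactly the same opening moves as the paper's proof (pull $V$ back through the $\alpha$-irresolute $f$, invoke the $\alpha^{m}$-closedness of $A$ against the $\alpha$-open set $f^{-1}(V)$, and form the set $f(cl(A))$, which is $\alpha^{m}$-closed because $cl(A)$ is closed and $f$ is an $\alpha^{m}$-closed map), but it stops at the decisive step and never closes it, so as written there is a genuine gap. The missing idea is a \emph{second} application of the defining condition of $\alpha^{m}$-closedness, this time on the $Y$-side: once one knows $f(cl(A))\subseteq V$, applying Definition 2.1(e) to the $\alpha^{m}$-closed set $f(cl(A))$ and the $\alpha$-open set $V$ gives $int(cl(f(cl(A))))\subseteq V$, and then $f(A)\subseteq f(cl(A))$ together with monotonicity of $int\circ cl$ yields $int(cl(f(A)))\subseteq int(cl(f(cl(A))))\subseteq V$, which is the desired conclusion. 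You record that $f(cl(A))$ is $\alpha^{m}$-closed and contains $f(A)$, but you never play it against $V$ in this way; that double use of the definition (once in $X$ on $A$, once in $Y$ on $f(cl(A))$) is precisely how the paper transfers the inclusion across $f$, and no commutation of images with $int$ and $cl$ is ever needed. Your fallback, Theorem 3.1, cannot supply the transfer either: that theorem requires an \emph{open} set of $(X,\tau)$ containing a preimage, whereas the set you possess, $f^{-1}(V)$, is only $\alpha$-open.

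You should also be aware that the hypothesis feeding this second application, namely $f(cl(A))\subseteq V$ (equivalently $cl(A)\subseteq f^{-1}(V)$), is obtained in the paper by reading ``$A$ is $\alpha^{m}$-closed'' as giving $cl(A)\subseteq f^{-1}(V)$ --- strictly stronger than what the printed definition ($int(cl(A))\subseteq U$ whenever $A\subseteq U$ with $U$ $\alpha$-open) warrants. Your more careful pullback yields only $int(cl(A))\subseteq f^{-1}(V)$, and from that alone the containment $f(cl(A))\subseteq V$, and hence the conclusion, does not follow; the paper takes the same liberty a second time when it concludes $cl(f(cl(A)))\subseteq U$ rather than $int(cl(f(cl(A))))\subseteq U$. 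So your instinct that the transfer is the crux is sound, and your use of the definition is actually more faithful than the paper's, but your proposal neither supplies the stronger containment the paper relies on nor any substitute for it, and without one the argument does not close.
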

\begin{proof}
Let $U$ be an $\alpha$-open set in $(Y, \sigma)$ such that $f(A)\subseteq U$. Since $f$ is $\alpha$-irresolute, $f^{-1}(U)$ is an $\alpha$-open set containing $A$. Hence $cl(A)\subseteq f^{-1}(U)$ as $A$ is $\alpha^{m}$-closed in $(X, \tau)$. Since $f$ is $\alpha^{m}$-closed, $f(cl(A))$ is an $\alpha^{m}$-closed set contained in the $\alpha$-open set $U$, which implies that $cl(f(cl(A)))\subseteq U$ and hence $cl(f(A))\subseteq U$. Therefore, $f(A)$ is an $\alpha^{m}$-closed set in $(Y, \sigma)$. \\
\end{proof}

\begin{corollary}
 3.8. Let $f:(X, \tau)\longrightarrow(Y, \sigma)$ be an $\alpha^{m}$-closed and $g:(Y, \sigma)\longrightarrow(Z, \eta)$ be $\alpha^{m}$-closed and $\alpha$-irresolute, then their composition $g\circ f:(X, \tau)\longrightarrow(Z, \eta)$ is $\alpha^{m}$-closed. \\ 
\end{corollary}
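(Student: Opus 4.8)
The plan is to unwind the definition of $\alpha^{m}$-closedness for the composition $g\circ f$ and then feed the output of $f$ directly into Proposition 3.7. Recall that a map is $\alpha^{m}$-closed precisely when the image of every closed set is $\alpha^{m}$-closed; this is the characterisation used in the proof of Theorem 3.6. So the entire task reduces to verifying that, for an arbitrary closed set $F$ of $(X,\tau)$, the set $(g\circ f)(F)=g(f(F))$ is $\alpha^{m}$-closed in $(Z,\eta)$.

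First I would fix a closed set $F\subseteq X$ and invoke the hypothesis that $f$ is $\alpha^{m}$-closed to obtain that $f(F)$ is an $\alpha^{m}$-closed subset of $(Y,\sigma)$. This is the only place the closedness of $f$ is used, and it produces exactly the kind of set that Proposition 3.7 takes as input.

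Next I would apply Proposition 3.7 to the map $g:(Y,\sigma)\to(Z,\eta)$, which by hypothesis is simultaneously $\alpha$-irresolute and $\alpha^{m}$-closed, with $A=f(F)$. Since $f(F)$ has just been shown to be $\alpha^{m}$-closed, Proposition 3.7 yields that $g(f(F))$ is $\alpha^{m}$-closed in $(Z,\eta)$. Because $g(f(F))=(g\circ f)(F)$ and $F$ was an arbitrary closed set, this says precisely that $g\circ f$ carries closed sets to $\alpha^{m}$-closed sets, i.e. $g\circ f$ is $\alpha^{m}$-closed.

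There is no genuine obstacle here: the statement is a straightforward composition argument, and the only subtlety is bookkeeping about which map needs which property. The $\alpha$-irresoluteness and $\alpha^{m}$-closedness of $g$ are both consumed inside Proposition 3.7 (to pull back $\alpha$-open sets and to push forward closed sets, respectively), whereas $f$ is required only to be $\alpha^{m}$-closed. I would take care \emph{not} to demand that $f$ itself be $\alpha$-irresolute, since that hypothesis is deliberately imposed on the outer map $g$ rather than on the inner map $f$.
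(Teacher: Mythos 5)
Your proposal is correct and follows exactly the paper's argument: take a closed set $F$ in $(X,\tau)$, use the $\alpha^{m}$-closedness of $f$ to get that $f(F)$ is $\alpha^{m}$-closed in $(Y,\sigma)$, then apply the preceding proposition (on $\alpha$-irresolute, $\alpha^{m}$-closed maps preserving $\alpha^{m}$-closed sets) to $g$ to conclude $(g\circ f)(F)=g(f(F))$ is $\alpha^{m}$-closed in $(Z,\eta)$. Your closing remark about which map carries which hypothesis is also accurate --- the $\alpha$-irresoluteness is needed only for the outer map $g$, exactly as in the paper.
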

\begin{proof}
Let $A$ be a closed set of $(X, \tau)$. Then by hypothesis $f(A)$ is an $\alpha^{m}$-closed set in $(Y, \sigma)$. Since $g$ is both $\alpha^{m}$-closed and
$\alpha$-irresolute by the above Proposition, $g(f(A)) = (g\circ f)(A)$ is $\alpha^{m}$-closed in $(Z, \eta)$ and therefore $g\circ f$ is $\alpha^{m}$-closed. \\
\end{proof}

\begin{proposition}
Let $f:(X, \tau)\longrightarrow(Y, \sigma)$ be a closed map and $g:(Y, \sigma)\longrightarrow(Z, \eta)$ be an $\alpha^{m}$-closed map, then their composition $g\circ f:(X, \tau)\longrightarrow(Z, \eta)$. \\
\end{proposition}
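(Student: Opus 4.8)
The plan is to verify the defining property of an $\alpha^{m}$-closed map directly for the composite $g\circ f$, by pushing an arbitrary closed set through the two maps in turn. (I read the intended conclusion of the statement as ``$g\circ f$ is $\alpha^{m}$-closed,'' which the displayed line appears to have truncated.) First I would fix an arbitrary closed set $A$ of $(X, \tau)$; the goal is then to show that its image $(g\circ f)(A)$ is $\alpha^{m}$-closed in $(Z, \eta)$, since that is precisely what it means for $g\circ f$ to be an $\alpha^{m}$-closed map.

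The key steps are short and sequential. Since $f$ is a closed map, $f(A)$ is a genuinely closed set of $(Y, \sigma)$. Since $g$ is an $\alpha^{m}$-closed map, the image of this closed set, namely $g(f(A))$, is $\alpha^{m}$-closed in $(Z, \eta)$. Finally I would invoke the set-theoretic identity $g(f(A)) = (g\circ f)(A)$ to conclude that $(g\circ f)(A)$ is $\alpha^{m}$-closed. Because $A$ was an arbitrary closed set, this establishes that $g\circ f$ sends every closed set of $(X, \tau)$ to an $\alpha^{m}$-closed set of $(Z, \eta)$, which is exactly the definition of an $\alpha^{m}$-closed map.

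This argument parallels the proof of the preceding Corollary, but it is strictly simpler: there, the first map produced only an $\alpha^{m}$-closed set, so the $\alpha$-irresoluteness of $g$ and the containment-of-closure argument of the earlier Proposition were needed to feed it into $g$. Here $f$ already delivers an honestly closed set, so the image can be passed straight into the $\alpha^{m}$-closed map $g$ with no further hypotheses. I do not anticipate any genuine obstacle; the single point requiring care is to confirm that $f(A)$ is \emph{closed} (not merely $\alpha^{m}$-closed) before applying $g$, and this is guaranteed immediately by the closedness of $f$.
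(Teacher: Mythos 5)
Your proof is correct and follows essentially the same route as the paper's: take an arbitrary closed set $A$ of $(X, \tau)$, use closedness of $f$ to get $f(A)$ closed in $(Y, \sigma)$, then apply the $\alpha^{m}$-closedness of $g$ to conclude $(g\circ f)(A) = g(f(A))$ is $\alpha^{m}$-closed in $(Z, \eta)$. You also correctly identified that the statement's conclusion was truncated and should read ``$g\circ f$ is $\alpha^{m}$-closed.''
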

\begin{proof}
Let $A$ be a closed set of $(X, \tau)$. Then by assumption $f(A)$ is closed in $(Y, \sigma)$ and again by assumption $g(f(A))$ is $\alpha^{m}$-closed in $(Z, \eta)$. i.e., $(g\circ f)(A)$ is $\alpha^{m}$-closed in $(Z, \eta)$ and so $g\circ f$ is $\alpha^{m}$-closed. \\
\end{proof}

\begin{theorem}
Let $f:(X, \tau)\longrightarrow(Y, \sigma)$ and $g:(Y, \sigma)\longrightarrow(Z, \eta)$ be two maps such that their composition $g\circ f:(X, \tau)\longrightarrow(Z, \eta)$ is an $\alpha^{m}$-closed map. Then the following statements are true. \\ \\
$(a)$ If $f$ is continuous and surjective, then $g$ is $\alpha^{m}$-closed. \\ \\
$(b)$ If $g$ is $\alpha^{m}$-irresolute and injective, then $f$ is $\alpha^{m}$-closed. \\ 
\end{theorem}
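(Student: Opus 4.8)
The plan is to prove each part by taking a closed set in the appropriate space and chasing it through the composition, exploiting the given hypotheses on $f$ and $g$ to move the image set back and forth between the spaces.

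For part $(a)$, I would start with an arbitrary closed set $A$ of $(Y,\sigma)$ and aim to show $g(A)$ is $\alpha^{m}$-closed in $(Z,\eta)$. The natural move is to pull $A$ back along $f$: since $f$ is continuous, $f^{-1}(A)$ is closed in $(X,\tau)$, so the hypothesis that $g\circ f$ is $\alpha^{m}$-closed gives that $(g\circ f)(f^{-1}(A))$ is $\alpha^{m}$-closed in $(Z,\eta)$. The key step is then to observe that surjectivity of $f$ forces $f(f^{-1}(A))=A$, so that $(g\circ f)(f^{-1}(A))=g(f(f^{-1}(A)))=g(A)$. Hence $g(A)$ is $\alpha^{m}$-closed and $g$ is $\alpha^{m}$-closed.

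For part $(b)$, I would take an arbitrary closed set $A$ of $(X,\tau)$ and aim to show $f(A)$ is $\alpha^{m}$-closed in $(Y,\sigma)$. Since $g\circ f$ is $\alpha^{m}$-closed, $(g\circ f)(A)=g(f(A))$ is $\alpha^{m}$-closed in $(Z,\eta)$. Now I would apply the $\alpha^{m}$-irresoluteness of $g$: by Definition, $g^{-1}$ of an $\alpha^{m}$-closed set is $\alpha^{m}$-closed, so $g^{-1}(g(f(A)))$ is $\alpha^{m}$-closed in $(Y,\sigma)$. The crucial step is to use injectivity of $g$ to identify this preimage with $f(A)$ itself, via $g^{-1}(g(f(A)))=f(A)$, which holds precisely because $g$ is one-to-one. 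This yields that $f(A)$ is $\alpha^{m}$-closed, so $f$ is $\alpha^{m}$-closed.

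The main obstacle in both parts is not any deep topological argument but rather the careful justification of the set-theoretic identities $f(f^{-1}(A))=A$ (needing surjectivity) and $g^{-1}(g(B))=B$ (needing injectivity); these are the only places where the extra hypotheses are consumed, and getting the direction of each inclusion right is where one must be attentive. Everything else is a direct appeal to the definitions of $\alpha^{m}$-closed map and $\alpha^{m}$-irresolute map together with the assumed $\alpha^{m}$-closedness of the composition.
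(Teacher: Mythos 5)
Your proposal is correct and follows essentially the same argument as the paper: in part $(a)$ pulling the closed set back along the continuous $f$, applying the $\alpha^{m}$-closedness of $g\circ f$, and using surjectivity to get $f(f^{-1}(A))=A$; in part $(b)$ pushing the closed set through $g\circ f$, pulling back along the $\alpha^{m}$-irresolute $g$, and using injectivity to get $g^{-1}(g(f(A)))=f(A)$. Your explicit attention to where the set-theoretic identities need surjectivity and injectivity is if anything slightly more careful than the paper's own wording.
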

\begin{proof}
$(a)$ Let $A$ be a closed set of $(Y, \sigma)$. Since $f$ is continuous, $f^{-1}(A)$ is closed in $(X, \tau)$ and since $g\circ f$ is $\alpha^{m}$-closed, $(g\circ f)(f^{-1}(A))$ is $\alpha^{m}$-closed in $(Z, \eta)$. That is $g(A)$ is $\alpha^{m}$-closed in $(Z, \eta)$, since $f$ is surjective. Therefore $g$ is an $\alpha^{m}$-closed map. \\ \\
$(b)$ Let $B$ be a closed set of $(X, \tau)$. Since $g\circ f$ is $\alpha^{m}$-closed, $(g\circ f)(B)$ is $\alpha^{m}$-closed in $(Z, \eta)$. Since $g$ is $\alpha^{m}$-irresolute, $g^{-1}((g\circ f)(B))$ is $\alpha^{m}$-closed set in $(Y, \sigma)$. That is $f(B)$ is $\alpha^{m}$-closed in $(Y, \sigma)$, since $g$ is injective. Thus $f$ is an $\alpha^{m}$-closed map. \\ 
\end{proof}

\begin{definition} $(a)$ Let $X$ be a topological space and let $x\in X$. A subset $N$ of $X$ is said to be $\alpha^{m}$-nbbd of $x$ if there exists an $\alpha^{m}$-open set $G$ such that $x\in G\subset N$.

The collection of all $\alpha^{m}$-nbhd of $x\in X$ is called an $\alpha^{m}$-nbhd system at $x$ and shall be denoted by $n^{\#_{\alpha^{m}}}(x)$. \\ \\
$(b)$ Let $X$ be a topological space and $A$ be a subset of $X$, A subset $N$ of $X$ is said to be $\alpha^{m}$-nbhd of $A$ if there exists an $\alpha^{m}$-open set $G$ such that $A\in G\subseteq N$.\\ \\
$(c)$ Let $A$ be a subset of $X$. A point $x\in A$ is said to be an $\alpha^{m}$-interior point of $A$, if $A$ is an $n^{\#_{\alpha^{m}}}(x)$. The set of all $\alpha^{m}$-interior points of $A$ is called an $\alpha^{m}$-interior of $A$ and is denoted by $I_{\alpha^{m}}(A)$. 

$I_{\alpha^{m}}(A)=\bigcup\{G: G \ is \ \alpha^{m}$-$open,\ G\subset A\}$. \\ \\
$(d)$ Let $A$ be a subset of $X$. A point $x\in A$ is said to be an $\alpha^{m}$-closure of $A$. Then 

$C_{\alpha^{m}}(A)=\bigcap\left\{F:A\subset F\in \alpha^{m}C(X, \tau_{X})\right\}$. \\
\end{definition}

\begin{theorem}
Let $X$ be a topological space. If $A$ is $\alpha^{m}$-closed subset of $X$ and $x\in C_{\alpha^{m}}(A)$ if and only if for any $\alpha^{m}$-nbhd $N$ of $x$ in $X$, $N\cap A\neq\phi$. \\
\end{theorem}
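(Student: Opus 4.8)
The plan is to prove the biconditional by establishing the contrapositive of one direction and a direct argument for the other, since the statement relates membership in the $\alpha^{m}$-closure to an intersection condition involving $\alpha^{m}$-neighbourhoods. Recall that by Definition, $C_{\alpha^{m}}(A)=\bigcap\{F:A\subseteq F\in \alpha^{m}C(X,\tau_{X})\}$, so $x\in C_{\alpha^{m}}(A)$ means that $x$ lies in every $\alpha^{m}$-closed set containing $A$. The key translation step is to relate this intersection condition to the existence (or non-existence) of an $\alpha^{m}$-open set that separates $x$ from $A$, using that $\alpha^{m}$-open and $\alpha^{m}$-closed sets are complementary.

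First I would prove the reverse implication in contrapositive form. Suppose $x\notin C_{\alpha^{m}}(A)$; then by the definition of the $\alpha^{m}$-closure as an intersection, there is some $\alpha^{m}$-closed set $F$ with $A\subseteq F$ but $x\notin F$. Setting $N=F^{c}$ gives an $\alpha^{m}$-open set containing $x$, and since it is an $\alpha^{m}$-open set with $x\in N\subseteq N$ it is in particular an $\alpha^{m}$-neighbourhood of $x$. Because $A\subseteq F$, we get $N\cap A=F^{c}\cap A=\phi$. Thus the existence of a $\alpha^{m}$-neighbourhood of $x$ missing $A$ follows, which is the contrapositive of the ``$N\cap A\neq\phi$ for all $N$'' direction.

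For the forward implication, again by contraposition, assume there is an $\alpha^{m}$-neighbourhood $N$ of $x$ with $N\cap A=\phi$. By definition of $\alpha^{m}$-neighbourhood there is an $\alpha^{m}$-open set $G$ with $x\in G\subseteq N$, so $G\cap A=\phi$, i.e.\ $A\subseteq G^{c}$. Now $G^{c}$ is $\alpha^{m}$-closed and contains $A$, so $G^{c}$ is one of the sets $F$ in the intersection defining $C_{\alpha^{m}}(A)$; hence $C_{\alpha^{m}}(A)\subseteq G^{c}$. Since $x\in G$ we have $x\notin G^{c}$, and therefore $x\notin C_{\alpha^{m}}(A)$, completing the contrapositive.

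The main obstacle I anticipate is not a deep one but a bookkeeping subtlety: one must be careful that the passage between a general $\alpha^{m}$-neighbourhood $N$ and a witnessing $\alpha^{m}$-open set $G$ is used in the correct direction in each half, since $N$ itself need not be $\alpha^{m}$-open while $G$ is. I would also note that the hypothesis ``$A$ is $\alpha^{m}$-closed'' stated in the theorem is in fact not needed for either direction of the equivalence, as the argument only uses that $\alpha^{m}$-closed sets are exactly the complements of $\alpha^{m}$-open sets and the defining formula for $C_{\alpha^{m}}(A)$; I would mention this so the reader is not misled into looking for a use of that hypothesis.
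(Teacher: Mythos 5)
Your proof is correct and takes essentially the same route as the paper's: both directions hinge on passing between an $\alpha^{m}$-neighbourhood $N$ and a witnessing $\alpha^{m}$-open set $G$, then complementing and invoking the intersection formula $C_{\alpha^{m}}(A)=\bigcap\{F:A\subseteq F\in \alpha^{m}C(X,\tau_{X})\}$; the only difference is that the paper phrases each direction as a proof by contradiction where you use the (logically equivalent) contrapositive. Your side remark is also accurate: the hypothesis that $A$ is $\alpha^{m}$-closed is never used, and indeed the paper's own proof makes no use of it.
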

\begin{proof}
Let us assume that there is an $\alpha^{m}$-nbhd $N$ of the point $x$ in $X$ such that $N\cap A=\phi$. There exist an $\alpha^{m}$-open set $G$ of $X$ such that $x\in G\subseteq N$. Therefore we have $G\cap A=\phi$ and so $x\in X-G$. Then $C_{\alpha^{m}}(A)\in X-G$ and therefore $x\notin C_{\alpha^{m}}(A)$, which is the contradiction to the hypothesis $x\in C_{\alpha^{m}}(A)$. Therefore $N\cap A\neq\phi$. \\ \\

Conversely, Suppose that $x\notin C_{\alpha^{m}}(A)$, then there exists a $\alpha^{m}$-closed set $G$ of $X$ such that $A\subseteq G$ and $x\notin G$. Thus $x\in X-G$ and $X-G$ is $\alpha^{m}$-open in $X$ and hence $X-G$ is an $\alpha^{m}$-nbhd of $x$ in $X$. But $A\cap(X-G)=\phi$ which is a contradiction. Hence $x\in C_{\alpha^{m}}(A)$. \\
\end{proof}

\begin{proposition}
If $A$ be a subset of $X$, then $I_{\alpha^{m}}(A)=\bigcup\{G: G \ is \ \alpha^{m}$-$open,\ G\subset A\}$. \\
\end{proposition}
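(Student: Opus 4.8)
The plan is to prove the equality by double inclusion, unwinding the definition of the $\alpha^{m}$-interior as the set of $\alpha^{m}$-interior points and translating each such point into membership in some $\alpha^{m}$-open subset of $A$.

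First I would establish $I_{\alpha^{m}}(A)\subseteq\bigcup\{G:G\text{ is }\alpha^{m}\text{-open},\ G\subset A\}$. Take any $x\in I_{\alpha^{m}}(A)$. By Definition $(c)$, $x$ is an $\alpha^{m}$-interior point of $A$, which means precisely that $A$ is an $\alpha^{m}$-nbhd of $x$, i.e. $A\in n^{\#_{\alpha^{m}}}(x)$. Applying Definition $(a)$, there exists an $\alpha^{m}$-open set $G$ with $x\in G\subseteq A$. Hence $x$ lies in the displayed union, and the first inclusion follows.

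Next I would prove the reverse inclusion. Suppose $x\in\bigcup\{G:G\text{ is }\alpha^{m}\text{-open},\ G\subset A\}$. Then there is an $\alpha^{m}$-open set $G$ with $x\in G\subseteq A$, so by Definition $(a)$ the set $A$ is an $\alpha^{m}$-nbhd of $x$, and therefore by Definition $(c)$ the point $x$ is an $\alpha^{m}$-interior point of $A$, that is $x\in I_{\alpha^{m}}(A)$. Combining the two inclusions yields the claimed identity.

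The argument is essentially a chain of definitions rather than a substantive computation, so I do not expect a genuine obstacle. The only point requiring care is the faithful back-and-forth translation between the three notions involved, namely $\alpha^{m}$-interior point, $\alpha^{m}$-nbhd, and membership in an $\alpha^{m}$-open subset, since the statement is in effect a reformulation of the pointwise description already recorded in Definition $(c)$; the entire content lies in verifying that this pointwise description and the set-builder description coincide.
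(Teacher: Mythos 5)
Your proof is correct and follows essentially the same route as the paper: both arguments are a pure unwinding of the definitions, translating $x\in I_{\alpha^{m}}(A)$ into ``$A$ is an $\alpha^{m}$-nbhd of $x$'' and then into the existence of an $\alpha^{m}$-open $G$ with $x\in G\subset A$; the paper writes this as a single chain of equivalences, while you split it into two inclusions. The only cosmetic difference is your explicit double-inclusion structure, which is arguably cleaner but mathematically identical.
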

\begin{proof}
Let $A$ be a subset of $X$, $x\in I_{\alpha^{m}}(A)\Longleftrightarrow$ $x$ is an $\alpha^{m}$-interior point of $A$, $A$ is an $n^{\#_{\alpha^{m}}}(x)$ which implies that there exists an $\alpha^{m}$-open set $G$ such that $x\in G\subset A$, $x\in \bigcup\{G: G \ is \ \alpha^{m}$-$open,\ G\subset A\}$. Hence $I_{\alpha^{m}}(A)=\bigcup\{G: G \ is \ \alpha^{m}$-$open,\ G\subset A\}$. \\
\end{proof}

\begin{theorem}
Assume that the collection of all $\alpha^{m}$-open sets of $Y$ is closed under arbitrary union. Let $f:(X, \tau)\longrightarrow(Y, \sigma)$
be a map. Then the following statements are equivalent: \\ \\
$(a)$ $f$ is an $\alpha^{m}$-open map. \\ \\
$(b)$ For a subset $A$ of $(X, \tau)$, $f(int(A))\subseteq I_{\alpha^{m}}(f(A))$. \\ \\
$(c)$ For each $x\in X$ and for each neighborhood $U$ of $x$ in $(X, \tau)$, there exists an $\alpha^{m}$-nbhd $W$ of $f(x)$ in $(Y, \sigma)$ such that
$W\subseteq f(U)$. \\
\end{theorem}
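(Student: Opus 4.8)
The plan is to prove the three statements are equivalent by establishing the cycle $(a)\Rightarrow(b)\Rightarrow(c)\Rightarrow(a)$. This is the standard strategy for characterizations of openness-type conditions, and the hypothesis that $\alpha^{m}$-open sets of $Y$ are closed under arbitrary union is exactly what is needed to guarantee that $I_{\alpha^{m}}$ behaves like a genuine interior operator, in particular that $I_{\alpha^{m}}(B)$ is itself $\alpha^{m}$-open for any $B$.

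\medskip

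For $(a)\Rightarrow(b)$, I would start from an arbitrary subset $A$ of $X$. Since $int(A)$ is open in $(X,\tau)$ and $f$ is an $\alpha^{m}$-open map, $f(int(A))$ is an $\alpha^{m}$-open set in $(Y,\sigma)$. Because $int(A)\subseteq A$ we have $f(int(A))\subseteq f(A)$, so $f(int(A))$ is an $\alpha^{m}$-open set contained in $f(A)$. By Proposition~3.13 the $\alpha^{m}$-interior $I_{\alpha^{m}}(f(A))$ is the union of all $\alpha^{m}$-open sets contained in $f(A)$, hence $f(int(A))\subseteq I_{\alpha^{m}}(f(A))$, which is exactly $(b)$.

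\medskip

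For $(b)\Rightarrow(c)$, fix $x\in X$ and a neighborhood $U$ of $x$; without loss of generality take $U$ open so that $U=int(U)$. Applying $(b)$ with $A=U$ gives $f(U)=f(int(U))\subseteq I_{\alpha^{m}}(f(U))$, and since always $I_{\alpha^{m}}(f(U))\subseteq f(U)$, we get $f(U)=I_{\alpha^{m}}(f(U))$. Here the closure-under-union hypothesis is used to conclude that $W:=I_{\alpha^{m}}(f(U))$ is an $\alpha^{m}$-open set; as $x\in U$ forces $f(x)\in W$ and $W\subseteq f(U)$, this $W$ is an $\alpha^{m}$-nbhd of $f(x)$ with $W\subseteq f(U)$, giving $(c)$. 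Finally, for $(c)\Rightarrow(a)$, let $G$ be open in $X$; I would show $f(G)$ is $\alpha^{m}$-open by showing it equals its own $\alpha^{m}$-interior. For each $y=f(x)\in f(G)$, condition $(c)$ applied to the neighborhood $G$ of $x$ yields an $\alpha^{m}$-nbhd $W_{x}$ of $y$ with $W_{x}\subseteq f(G)$, hence an $\alpha^{m}$-open set $G_{x}$ with $y\in G_{x}\subseteq f(G)$. Then $f(G)=\bigcup_{y\in f(G)}G_{x}$, and the closure-under-arbitrary-union hypothesis makes this union $\alpha^{m}$-open.

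\medskip

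The main obstacle I anticipate is not any single deduction but the repeated reliance on the structural hypothesis: without closure under arbitrary union, $I_{\alpha^{m}}$ need not produce an $\alpha^{m}$-open set, so the steps $(b)\Rightarrow(c)$ and $(c)\Rightarrow(a)$ would break down. I would therefore be careful to invoke that hypothesis explicitly at each point where I claim a union of $\alpha^{m}$-open sets is again $\alpha^{m}$-open, and to treat ``$\alpha^{m}$-nbhd'' and ``$\alpha^{m}$-open set containing the point'' interchangeably only through Definition~3.9, which licenses passing from a neighborhood to an $\alpha^{m}$-open set nested inside it.
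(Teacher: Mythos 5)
Your proposal is correct and follows essentially the same route as the paper: the cycle $(a)\Rightarrow(b)\Rightarrow(c)\Rightarrow(a)$, with the same use of $int(A)$ in the first step, the identity $f(G)=I_{\alpha^{m}}(f(G))$ for open $G$ in the second, and the union $\bigcup V_{y}$ together with the arbitrary-union hypothesis in the third. If anything, you are slightly more careful than the paper, which invokes the union hypothesis only in $(c)\Rightarrow(a)$ while you correctly flag that it is also needed in $(b)\Rightarrow(c)$ to know $I_{\alpha^{m}}(f(U))$ is $\alpha^{m}$-open.
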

\begin{proof}
$(a)\Longrightarrow (b)$ Suppose $f$ is $\alpha^{m}$-open. Let $A\subseteq X$. Then $int(A)$ is open in $(X, \tau)$ and so $f(int(A))$ is $\alpha^{m}$-open in $(Y, \sigma)$. We have $f(int(A))\subseteq f(A)$. Therefore, $f(int(A))\subseteq I_{\alpha^{m}}(f(A))$. \\ \\
$(b)\Longrightarrow (c)$ Suppose $(b)$ holds. Let $x\in X$ and $U$ be an arbitrary neighborhood of $x$ in $(X, \tau)$. Then there exists an open set $G$ such that $x\in G\subseteq U$. By assumption, $f(G) = f(int(G))\subseteq I_{\alpha^{m}}(f(G))$. This implies $f(G) = I_{\alpha^{m}}(f(G))$, we have $f(G)$ is $\alpha^{m}$-open in $(Y, \sigma)$. Further, $f(x)\in f(G)\subseteq f(U)$ and so $(c)$ holds, by taking $W = f(G)$. \\ \\
$(c)\Longrightarrow (a)$ Suppose $(c)$ holds. Let $U$ be any open set in $(X, \tau)$), $x\in U$ and $f(x) = y$. Then $y\in f(U)$ and for each $y\in f(U)$, by assumption there exists an $\alpha^{m}$-nbhd $W_{y}$ of $y$ in $(Y, \sigma)$ such that $W_{y}\subseteq f(U)$. Since $W_{y}$ is an $\alpha^{m}$-nbhd of $y$, there exists an $\alpha^{m}$-open set $V_{y}$ in $(Y, \sigma)$ such that $y\in V_{y}\subseteq W_{y}$. Therefore, $f(U)=\cup\{V_{y}: y\in f(U)\}$ is an $\alpha^{m}$-open set in $(Y, \sigma)$ by the given condition. Thus $f$ is an $\alpha^{m}$-open map. \\
\end{proof}

\begin{corollary}
A map $f:(X, \tau)\longrightarrow(Y, \sigma)$ is $\alpha^{m}$-open if and only if $f^{-1}(C_{\alpha^{m}}(B))\subseteq cl(f^{-1}(B))$ for each subset $B$ of $(Y, \sigma)$. \\
\end{corollary}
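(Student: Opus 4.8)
The plan is to reduce the corollary to the equivalence already established in Theorem 3.13, specifically the characterization that $f$ is $\alpha^{m}$-open if and only if $f(int(A))\subseteq I_{\alpha^{m}}(f(A))$ for every $A\subseteq X$, and then to transport that inclusion across complements using the duality between the $\alpha^{m}$-interior and the $\alpha^{m}$-closure. First I would record this duality: for every $S\subseteq Y$ one has $C_{\alpha^{m}}(S)=Y\setminus I_{\alpha^{m}}(Y\setminus S)$. This is immediate from De Morgan's laws together with the fact that complements of $\alpha^{m}$-open sets are exactly the $\alpha^{m}$-closed sets, and it requires no extra hypothesis. I would also keep at hand the routine relations $f^{-1}(S^{c})=(f^{-1}(S))^{c}$ and $int(A^{c})=(cl(A))^{c}$, the monotonicity of $I_{\alpha^{m}}$, and the inclusions $f(f^{-1}(S))\subseteq S$ and $A\subseteq f^{-1}(f(A))$.

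For the forward implication, I would assume $f$ is $\alpha^{m}$-open and fix $B\subseteq Y$, then apply condition $(b)$ of Theorem 3.13 to the auxiliary set $A=(f^{-1}(B))^{c}$. Here $int(A)=(cl(f^{-1}(B)))^{c}$ and $f(A)=f(f^{-1}(B^{c}))\subseteq B^{c}$, so monotonicity gives $f((cl(f^{-1}(B)))^{c})\subseteq I_{\alpha^{m}}(B^{c})=Y\setminus C_{\alpha^{m}}(B)$. Pulling this back through $f^{-1}$ (using $A\subseteq f^{-1}(f(A))$) and taking complements yields precisely $f^{-1}(C_{\alpha^{m}}(B))\subseteq cl(f^{-1}(B))$.

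For the converse, I would assume the inclusion holds for every $B$, fix an arbitrary $A\subseteq X$, and verify condition $(b)$ so as to quote Theorem 3.13. Applying the hypothesis to $B=(f(A))^{c}$ and rewriting $C_{\alpha^{m}}((f(A))^{c})=Y\setminus I_{\alpha^{m}}(f(A))$ via the duality, the assumed inclusion becomes, after complementation, $int(f^{-1}(f(A)))\subseteq f^{-1}(I_{\alpha^{m}}(f(A)))$. Since $A\subseteq f^{-1}(f(A))$ gives $int(A)\subseteq int(f^{-1}(f(A)))$, applying $f$ and using $f(f^{-1}(S))\subseteq S$ produces $f(int(A))\subseteq I_{\alpha^{m}}(f(A))$, which is exactly condition $(b)$; hence $f$ is $\alpha^{m}$-open.

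The only genuine obstacle is the bookkeeping with complements: each direction hinges on feeding the correct auxiliary set into Theorem 3.13(b)---namely $(f^{-1}(B))^{c}$ going forward and $(f(A))^{c}$ coming back---and on passing $int$, $cl$, $I_{\alpha^{m}}$ and $C_{\alpha^{m}}$ cleanly through $f$ and $f^{-1}$. I would be most careful about the two monotone, non-reversible steps $f(f^{-1}(S))\subseteq S$ and $A\subseteq f^{-1}(f(A))$, making sure each is applied in the direction that preserves the target inclusion, since no surjectivity or injectivity is assumed to tighten these containments into equalities.
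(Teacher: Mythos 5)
Your proposal is correct, but it takes a genuinely different route from the paper's own proof, which never touches Theorem 3.13. The paper argues directly via the saturation-type characterization: in the forward direction it takes the closed set $cl(f^{-1}(B))\supseteq f^{-1}(B)$ and asserts the existence of an $\alpha^{m}$-closed $K\supseteq B$ with $f^{-1}(K)\subseteq cl(f^{-1}(B))$ (in effect $K=\bigl(f\bigl((cl(f^{-1}(B)))^{c}\bigr)\bigr)^{c}$, the open-map analogue of the construction in Theorem 3.4, which the paper proved only for $\alpha^{m}$-closed maps), and then uses $C_{\alpha^{m}}(B)\subseteq K$; in the converse it puts $K=C_{\alpha^{m}}(S)$, checks $S\subseteq K$ and $f^{-1}(K)\subseteq cl(f^{-1}(S))\subseteq F$ for any closed $F\supseteq f^{-1}(S)$, and concludes ``$f$ is $\alpha^{m}$-open'' by the same unproved dual characterization. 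Your route through Theorem 3.13(b) and the duality $C_{\alpha^{m}}(S)=Y\setminus I_{\alpha^{m}}(Y\setminus S)$ is tighter in that it leans only on statements the paper actually proves, and your complement bookkeeping all checks out: $int\bigl((f^{-1}(B))^{c}\bigr)=\bigl(cl(f^{-1}(B))\bigr)^{c}$, the passage from $f(int(A))\subseteq Y\setminus C_{\alpha^{m}}(B)$ to the target inclusion via $int(A)\subseteq f^{-1}(f(int(A)))$, and in the converse the rewriting of the hypothesis at $B=(f(A))^{c}$ as $int(f^{-1}(f(A)))\subseteq f^{-1}(I_{\alpha^{m}}(f(A)))$, from which condition (b) follows. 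One caveat you should make explicit: your converse invokes the implication $(b)\Longrightarrow(a)$ of Theorem 3.13, which carries that theorem's standing hypothesis that the $\alpha^{m}$-open sets of $Y$ are closed under arbitrary union, whereas the corollary as stated omits it; this is not a defect peculiar to your argument, since the paper's own converse silently needs the same assumption (it asserts that $C_{\alpha^{m}}(S)$ is $\alpha^{m}$-closed, which requires arbitrary intersections of $\alpha^{m}$-closed sets to be $\alpha^{m}$-closed), but you should state that the corollary inherits Theorem 3.13's hypothesis. Note also that your forward direction uses only $(a)\Longrightarrow(b)$ and the monotonicity of $I_{\alpha^{m}}$, so that half is free of the union-closure assumption, matching the paper.
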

\begin{proof}
Suppose that $f$ is $\alpha^{m}$-open. Then for any $B\subseteq Y$, $f^{-1}(B)\subseteq cl(f^{-1}(B))$, there exists an $\alpha^{m}$-closed set
$K$ of $(Y, \sigma)$ such that $B\subseteq K$ and $f^{-1}(K)\subseteq cl(f^{-1}(B))$. Therefore, $f^{-1}(C_{\alpha^{m}}(B))\subseteq (f^{-1}(K))\subseteq cl(f^{-1}(B))$, since $K$ is an $\alpha^{m}$-closed set in $(Y, \sigma)$. \\ \\

Conversely, let $S$ be any subset of $(Y, \sigma)$ and $F$ be any closed set containing $f^{-1}(S)$. Put $K = C_{\alpha^{m}}(S)$. Then $K$ is an $\alpha^{m}$-closed set and $S\subseteq K$. By assumption, $f^{-1}(K) = f^{-1}(C_{\alpha^{m}}(S))\subseteq cl(f^{-1}(S))\subseteq F$ and therefore, $f$ is $\alpha^{m}$-open. \\
\end{proof}



\begin{thebibliography}{}\label{bibliography}

\bibitem{an1986} Andrijevic. D, Semi-preopen sets, Mat. Vesink, 38 (1986), 24 - 32. \\

\bibitem{aroc1997} I. Arockiarani, Studies on Generalizations of Generalized Closed Sets and Maps in Topological Spaces, Ph. D Thesis, Bharathiar University, Coimbatore, (1997). \\

\bibitem{de1994} R.Devi, Studies on generalizations of closed maps and homeomorphisms in topological spaces, Ph.D Thesis, Bharathiar University, Coimbatore, (1994). \\

\bibitem{gn1997} Y. Gnanambal, On Generalized Pre-regular Closed Sets in Topological Spaces, Indian J. Pure Appl. Math., 28(1997), 351-360. \\

\bibitem{le1963} Levine. N, Semi-open sets and semi-continuity in topological spaces, Amer. Math. Monthly, 70 (1963), 36 - 41. \\

\bibitem{le1970} Levine. N, Generalized closed sets in topology, Rend. Circ. Math. Palermo, 19 (1970), 89 - 96. \\

\bibitem{lo1978} P. E. Long and L. L. Herington, Basic Properties of Regular Closed Functions, Rend. Cir. Mat. Palermo, 27(1978), 20-28. \\

\bibitem{ma1982} S.R.Malghan, Generalized closed maps, J. Karnataka Univ. Sci., 27(1982), 82-88. \\

\bibitem{ma1982}Mashhour. A. S, Abd EI-Monsef. M. E. and EI-Deeb. S. N., On Precontinuous and weak pre-continuous mapping, Proc. Math., Phys. Soc. Egypt, 53 (1982), 47 - 53. \\

\bibitem{na1999} N. Nagaveni, Studies on Generalizations of Homeomorphisms in Topological Spaces, Ph. D., Thesis, Bharathiar University, Coimbatore(1999). \\

\bibitem{nj1965} Njastad. O, On some classes of nearly open sets, Pacific J. Math., , 15 (1965), 961-970. \\

\bibitem{milb2014} Milby Mathew and R. Parimelazhagan, $\alpha^{m}$-Closed Sets in Topological Spaces, International Journal of Mathematical Analysis
Vol. 8, 2014, no. 47, 2325 - 2329. \\

\bibitem{milb} Milby Mathew and R. Parimelazhagan, $\alpha^{m}$-Continuous Maps in Topological Spaces. (submitted) \\

\end{thebibliography}
\end{document}